\newtheorem{theorem}{Theorem}[section]
\newtheorem{question}[theorem]{Question}
\theoremstyle{definition}
\newtheorem{example}[theorem]{Example}
\newtheorem{remark}[theorem]{Remark}
\newcommand{\tb}{\ensuremath{{\mbox{\tt tb}}}}
\begin{document}

\def \l {L(p, 1)}

 \title{On the Support Genus of Legendrian Knots}

 \author{Sinem  Onaran}

 \address{Department of Mathematics, Hacettepe University, 06800 Beytepe-Ankara, Turkey}
  \email{sonaran@hacettepe.edu.tr}
   \subjclass{57R17}

\begin{abstract}
In this paper, we show that any topological knot or link in $S^1 \times S^2$ sits on a planar page of an open book decomposition whose monodromy is a product of positive Dehn twists. As a consequence, any knot or link type in $S^1 \times S^2$ has a Legendrian representative having support genus zero. We also show this holds for some knots and links in the lens spaces $L(p,1)$.
\end{abstract}
\maketitle
 \setcounter{section}{0}

\section{Introduction} 

\noindent In 2002, Giroux showed that given any contact structure $\xi$ on a $3-$manifold $M$ there is an open book decomposition of the $3-$manifold such that the contact structure is transverse to the binding of the open book and it can be isotoped arbitrarily close to the pages. In this case we say contact $3-$manifold $(M, \xi)$ is supported by this open book decomposition. Further, Giroux showed that a contact $3-$manifold is Stein fillable (and hence tight) if and only if there is an open book decomposition for the contact $3-$manifold whose monodromy can be written as a product of positive Dehn twists, \cite{Gi2}. The purpose of this note is to construct the simplest open book decomposition of lens spaces whose monodromy is a product of positive Dehn twists and which contains a given knot or link on its page. The first result constructs explicitly such open book decomposition for  knots and links in $S^1 \times S^2$.   

\begin{theorem} Any topological knot or link in $S^1 \times S^2$ sits on a planar page of an open book decomposition whose monodromy is a product of positive Dehn twists.
 \label{thm1}
\end{theorem}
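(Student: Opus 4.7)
The plan is to start from the simplest open book of $S^1 \times S^2$---the one with annular pages, binding $S^1 \times \{N,S\}$ (two circles over the poles of $S^2$), and identity monodromy---and perform a finite sequence of positive Hopf band stabilizations until the given link $L$ sits on a single page. The starting open book is planar, and its monodromy is the empty product of positive Dehn twists; since plumbing a positive Hopf band preserves both planarity of the page and positivity of the monodromy, any open book obtained this way will have the desired form. Using an Alexander-type argument for $S^1 \times S^2$, I would first isotope $L$ so that it is positively transverse to every page of the standard open book; such a closed braid position is obtained by a generic perturbation making $L$ disjoint from the binding, followed by standard braiding techniques inside the mapping torus $A \times S^1$. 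Fixing a reference page $A_0$ and projecting $L$ onto it produces a braid diagram on the annulus with finitely many crossings.

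The second step is to absorb these crossings one at a time by positive Hopf band stabilizations. At each crossing I would plumb a positive Hopf band onto the page whose core arc runs through a small neighborhood of the crossing, enlarging the page (while preserving planarity, since plumbing bands onto a planar surface yields a planar surface) and composing the monodromy with a positive Dehn twist around the core. The two strands meeting at the crossing can then be pushed onto the band, with the Dehn twist absorbing the local twisting. After finitely many stabilizations, $L$ lies on a single planar page whose monodromy is a product of positive Dehn twists.

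The main obstacle is that a Hopf band stabilization produces a \emph{positive} Dehn twist, which directly absorbs a \emph{positive} braid crossing. For a negative crossing one needs a more subtle local construction: either route one strand around the Hopf band core in the opposite sense so that the positive Dehn twist cancels the negative local twist, or preprocess the braid (via Markov-type stabilizations in the annulus braid group) to replace each negative crossing by a pattern involving only positive crossings at the cost of additional strands. Setting up this local model carefully, and checking that the resulting isotopy of $L$ onto the stabilized page is consistent across all crossings, is the most delicate part of the argument.
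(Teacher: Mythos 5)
Your strategy (braid the link about the standard annulus open book of $S^1 \times S^2$, then absorb crossings by positive Hopf plumbings) is genuinely different from the paper's, and it contains a gap that you flag yourself but do not close: the negative crossings. A positive Hopf stabilization composes the monodromy with a positive Dehn twist along the core of the plumbed band, and pushing the two strands of a crossing onto that band realizes a \emph{positive} crossing; absorbing a negative crossing this way requires a \emph{negative} Hopf band, i.e.\ a negative Dehn twist, which destroys exactly the positivity you need. Neither of your proposed fixes works. Re-routing a strand ``in the opposite sense'' around the core cannot change the sign of the local twist, since a Dehn twist does not depend on an orientation of its core curve. And Markov-type stabilization cannot turn an arbitrary (annular) braid into a positive one: Markov moves preserve the link type, closures of positive braids are fibered, and most links are not, so for a general $K$ no amount of stabilization yields a positive braid word. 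As written, your argument proves the theorem only for links that happen to admit a positive braiding about the standard open book.

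The paper resolves precisely this sign problem, but in the language of surgery diagrams rather than stabilizations. The knot is presented as a pure braided plat alongside the $0$-framed unknot $U$ describing $S^1\times S^2$, the full twists are removed by blow-ups, and everything is arranged so that the resulting auxiliary unknots have framing $0$ or $-1$ only. The $0$-framed unknots are made transverse to the disk pages of the trivial open book of $S^3$ and become extra \emph{binding components} (extra punctures of the planar page), while each $-1$-framed unknot is isotoped onto a page and contributes a \emph{positive} Dehn twist. In other words, would-be negative twists are traded for additional boundary components instead of negative Dehn twists---this is the mechanism your plan is missing. That the issue is substantive and not a technicality is confirmed by Remark~\ref{remark2}: for $L(p,1)$ the analogous procedure forces $+1$-framed unknots, hence negative Dehn twists, and the construction fails there. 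To salvage your approach you would need an analogous move on the open book side, e.g.\ introducing new binding components (not merely positive Hopf plumbings) to absorb the negative crossings.
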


In \cite{On}, a new invariant of Legendrian knots in a contact $3-$manifold $(M, \xi)$ is defined using open book decompositions. The invariant is called the support genus $sg(L)$ of $L$ and it is defined as the minimal genus of a page of an open book decomposition of $M$ supporting $\xi$ such that $L$ sits on a page of the open book and the framings given by $\xi$ and the page agree. In \cite{On} it was shown that while any null-homologous knot with an overtwisted complement has $sg(L) = 0$, not all Legendrian knots have. Moreover, it was shown that any knot or link in the $3-$sphere $S^3$ sits on a planar page of an open book decomposition whose monodromy is a product of positive Dehn twists. This guarantees  a Legendrian representative of the given knot or link having support genus zero in the standard tight contact $S^3$. This paper addresses this theme for lens spaces.

According to \cite{El}, $S^1 \times S^2$ has a unique tight and Stein fillable contact structure. Then by \cite{Gi2}, the open book decomposition constructed in Theorem~\ref{thm1} supports this unique tight and Stein fillable contact structure. As a consequence of Theorem~\ref{thm1}, we have the following result on the support genus of Legendrian knots or links in $S^1 \times S^2$. 

\begin{theorem} Given a knot or link type $K$ in the tight contact $S^1 \times S^2$, there is a Legendrian representative $L$ of $K$ such that $sg(L) = 0$. 
  \label{thm12}
\end{theorem}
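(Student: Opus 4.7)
The plan is to combine Theorem~\ref{thm1} with the uniqueness of the tight contact structure on $S^1\times S^2$ and a standard Legendrian realization argument on open-book pages.

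First, given a topological knot or link type $K$ in $S^1\times S^2$, I would invoke Theorem~\ref{thm1} to place a representative of $K$ on a planar page of an open book decomposition of $S^1\times S^2$ whose monodromy is a product of positive Dehn twists. By Giroux's correspondence, this open book supports some contact structure $\xi'$ on $S^1\times S^2$, and since the monodromy factors into positive Dehn twists, the Giroux--Loi--Plamenevskaya style result cited from \cite{Gi2} guarantees $\xi'$ is Stein fillable, hence tight. Then Eliashberg's classification \cite{El} tells us $S^1\times S^2$ admits a unique tight contact structure up to isotopy, so $\xi'$ is isotopic to the standard tight contact structure $\xi_{std}$.

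Next, I would Legendrian-realize the representative of $K$ on the page. Concretely, after isotoping $\xi'$ to be arbitrarily close to the pages of the supporting open book (which is possible by Giroux's definition of compatibility), the characteristic foliation on each page is essentially the foliation by Legendrian curves determined by the page. A standard realization principle (see, e.g., the discussion in \cite{On}) then allows me to perturb $K$, while keeping it on the page, to a Legendrian curve $L$ whose contact framing agrees with the framing induced by the page. Applying the isotopy from $\xi'$ to $\xi_{std}$ transports $L$ to a Legendrian representative of $K$ in $(S^1\times S^2,\xi_{std})$.

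Finally, since the page of the open book is planar (genus zero) and $L$ sits on it with matching framings, by the definition of support genus we conclude $sg(L)=0$. The only genuinely delicate point, beyond quoting Theorem~\ref{thm1} and \cite{El,Gi2}, is the Legendrian realization step, but this is standard once the knot lies on a page and the contact structure is isotoped to be compatible with the open book.
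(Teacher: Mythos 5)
Your proposal is correct and follows essentially the same route as the paper: Theorem~\ref{thm1} places $K$ on a planar page of an open book with positive monodromy, Giroux's result gives Stein fillability hence tightness, Eliashberg's uniqueness identifies the supported structure with the standard tight one, and Legendrian realization on the planar page yields $sg(L)=0$. No meaningful differences from the paper's argument.
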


The proof of Theorem~\ref{thm1} does not extend to lens spaces (see Remark~\ref{remark2}); however, using another method we show the following.

\begin{theorem} Let $K$ be a knot or link type in $\l$ which is presented as a $2n$-pure braided plat as in Figure~\ref{knot2}, $n \in \mathbb{N}$. For $p > 2n - 2$, there is a tight contact structure $\xi$ on $\l$ and a Legendrian representative $L$ of $K$ in $(\l, \xi) $ such that $sg(L) = 0$.
  \label{thm2}
\end{theorem}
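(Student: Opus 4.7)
The plan is to exhibit an explicit planar open book decomposition of $L(p,1)$ that contains $K$ on one of its pages with matching framings and whose monodromy is a product of positive Dehn twists; tightness of the supported contact structure $\xi$ will then follow from Giroux's criterion \cite{Gi2}, and the Legendrian push-off $L$ of $K$ along the page will satisfy $sg(L) = 0$ by definition.

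First, I would use the surgery description $L(p,1) \cong S^{3}_{-p}(U)$ where $U$ is an unknot in $S^{3}$, and position the $2n$-pure braided plat diagram of $K$ (as in Figure~\ref{knot2}) so that the $2n$ strands pierce transversely a disk $D$ bounded by $U$. A planar surface $\Sigma$ is then built by taking $D$, removing small neighborhoods of the $2n$ intersection points, and attaching twisted bands between adjacent holes so as to realize the pure braid word; by construction $K$ lies on $\Sigma$ and $U$ appears as the outer boundary component of $\Sigma$.

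Next, I would promote $\Sigma$ to a page of an open book of $S^{3}$ by writing a monodromy $\phi$ as a word of Dehn twists along curves on $\Sigma$ encoding the pure braid generators (roughly, each $\sigma_{i}^{\pm 2}$ corresponds to a $\pm 1$ Dehn twist about a curve enclosing the $i$-th and $(i+1)$-st holes). To pass from $S^{3}$ to $L(p,1)$ I would compose $\phi$ with $\tau_{U}^{p}$, the $p$-th power of a positive Dehn twist parallel to the $U$-boundary of $\Sigma$; at the open-book level this realizes the $-p$-surgery on $U$, so the resulting open book has $L(p,1)$ as its underlying $3$-manifold while still carrying $K$ on a planar page with its page framing.

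The main technical obstacle, and where the hypothesis $p > 2n - 2$ enters, is verifying that the composite monodromy is actually a product of \emph{positive} Dehn twists. Any negative generator appearing in the pure braid word contributes a negative Dehn twist, and the accumulated deficit must be absorbed against the $\tau_{U}^{p}$ factor by repeatedly applying relations in the mapping class group of the planar page (chain, lantern, and daisy relations), each of which trades a negative interior twist for several positive interior twists at the cost of a single negative boundary-parallel twist along $U$. A careful bookkeeping bounds the total negative-boundary deficit accrued from a $2n$-strand pure braid by $2n-2$, which the $p$ positive surgery twists cancel precisely when $p > 2n-2$. Once positivity is secured, Giroux's theorem yields a Stein fillable (hence tight) contact structure $\xi$ on $L(p,1)$ supported by this open book, and the Legendrian $L$ obtained from $K$ on the page has $sg(L)=0$.
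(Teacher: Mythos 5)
Your overall strategy (put $K$ on a planar page, make the monodromy positive, invoke \cite{Gi2}) matches the goal, but the route you take is different from the paper's and the crucial step does not hold up. The paper works entirely in Kirby calculus: it keeps the mixed link diagram $K\cup U$ with $U$ the $(-p)$-framed surgery unknot, blows up at $2n-2$ points determined by how $U$ threads the $2n$-plat (Figure~\ref{knot3}), continues blowing up until every unknot linking $K$ or $U$ has framing $0$ and $U$ itself has framing $-1$, and removes the crossings of $K$ by the unknotting algorithm of \cite{On} (Remark 5.1, Theorem 5.2), which already produces only $0$- and $(-1)$-framed unknots. The resulting diagram is read directly as an open book: $0$-framed unknots become binding components and $(-1)$-framed unknots become positive Dehn twists on the page. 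The hypothesis $p>2n-2$ enters only to guarantee that $-p+(2n-2)<0$, so that $U$ can be brought to framing $-1$ using further $+1$-framed blow-ups that can themselves be pushed to framing $0$; it has nothing to do with the crossings of $K$.

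The genuine gap in your argument is the bookkeeping claim that ``the total negative-boundary deficit accrued from a $2n$-strand pure braid is bounded by $2n-2$.'' This cannot be true as stated: a pure braid on $2n$ strands can contain an arbitrarily long word in the negative generators (e.g.\ $A_{12}^{-N}$ for any $N$), so the number of negative Dehn twists in your proposed monodromy is unbounded in terms of $n$, and no fixed power $\tau_U^{p}$ can absorb it. Moreover, the mechanism you invoke is not established: the lantern, chain, and daisy relations are relations among \emph{positive} twists and do not provide a general device for trading an interior negative twist for positive interior twists plus a single negative boundary-parallel twist (if such a device existed in general, every contact structure supported by a planar open book would become Stein fillable after enough boundary twisting, which is false). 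Two smaller but real issues: (i) you would need to verify that your band surface $\Sigma$ really is a page of an open book of $S^{3}$ with the monodromy you describe, and (ii) the assertion that $\tau_U^{p}$ realizes $-p$-surgery on $U$ requires comparing the page framing of a boundary-parallel push-off of $U$ with its Seifert framing in $S^{3}$, which depends on how the other holes and bands of $\Sigma$ link $U$ and is not $0$ in your configuration. To repair the proof you should remove the negative crossings of $K$ before building the open book, by crossing-change blow-ups as in \cite{On}, rather than trying to cancel them inside the mapping class group afterwards.
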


\begin{remark} Note that Theorem~\ref{thm1} generalizes to knots or links in the connect sums $\#_m S^1\times S^2$. This generalization is handy in studying contact geometry in dimension $5$. In \cite{AEB}, Acu, Etnyre and  Ozbagci use this result to study open book decompositions of $5-$manifolds and show that many sub-critically fillable contact $5-$manifolds are supported by iterated planar open book decompositions. Theorem~\ref{thm2} generalises to  $\#_m\l$ as well.
\end{remark}

In the following section we prove the main theorems and discuss examples. We conclude with a handful of questions related to the support genus of Legendrian knots in Section 3. The reader may refer to \cite{Ge}, \cite{Et2}  for background information on contact structures and open book decompositions. 

\textbf{Acknowledgement} I would like to thank John B. Etnyre for his interest in this work. I would like to thank Hansj\"org Geiges for helpful correspondence. This work was partially supported by the BAGEP award of the Science Academy, Turkey.

\section{Knots and links in $\l$} 

\noindent The lens space $\l$ can be obtained by a surgery along a single $(-p)$-framed unknot $U$ in $S^3$. Given any knot $K$ in $\l$, take a projection of $K$ in the surgery diagram for $\l$ such that the unknot $U$ intersects the plane of the projection transversely at a single point.  The knot $K$ in the surgery diagram of $\l$ can be given as in Figure~\ref{knot1} which is called a mixed link diagram of the link $K \cup U$.  When $p=0$, surgery along $0$-framed unknot $U$ gives $S^1 \times S^2$.
\begin{figure}[h!]
\begin{center}
 \includegraphics[width=4cm]{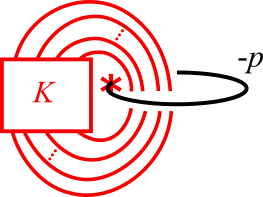}
 \caption{Knots in the surgery diagram of  $\l$.}
  \label{knot1}
\end{center}
\end{figure}

After isotoping the $(-p)$-framed unknot as in Figure~\ref{knot2}, we may present the knot $K$ as a pure braided plat by using Lemma $3.3$ of \cite{On}. Pure braided plat of $K$ is a shifted $2n$-plat given in Figure~\ref{knot2} where $2n$-braid is a pure braid.

\begin{figure}[h!]
\begin{center}
 \includegraphics[width=13cm]{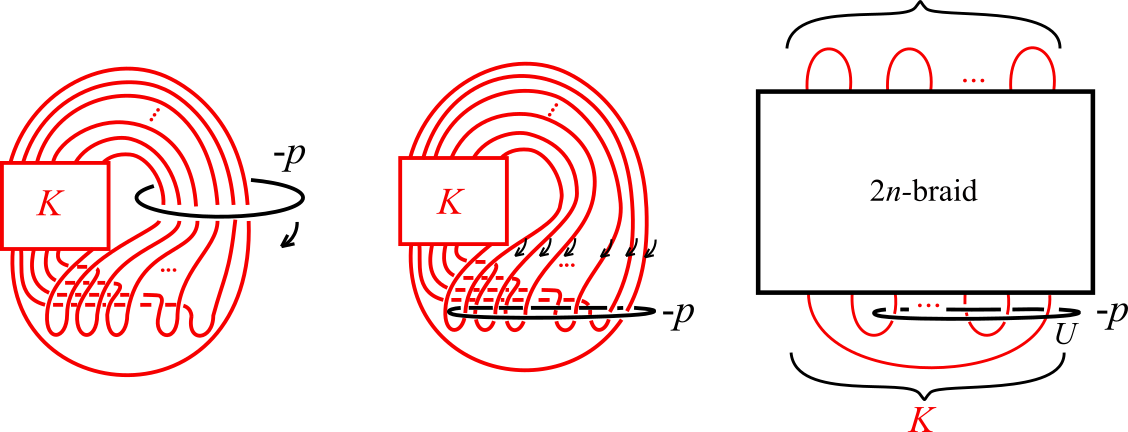}
 \caption{Pure braided plat presentation of $K$ in $\l$ where $2n$-braid is a pure braid.}
  \label{knot2}
\end{center}
\end{figure}

Given a knot $K$ in $\l$ as in Figure~\ref{knot2}, we decompose $K$ and the isotoped unknot $U$ in terms of the standard generators of the pure braid group. See Figure~\ref{knot3} for a decomposition of $U$. 

\begin{figure}[h!]
\begin{center}
 \includegraphics[width=11.5cm]{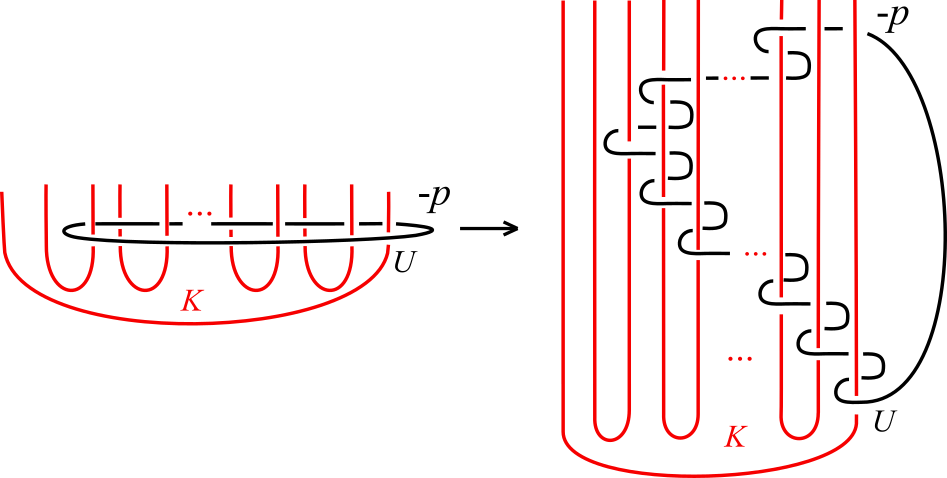}
 \caption{A decomposition of the surgery unknot $U$.}
  \label{knot3}
\end{center}
\end{figure}

\begin{proof}[Proof of Theorem~\ref{thm1}] We start with a decomposition of a knot $K$ and an isotoped unknot $U$ in $S^1\times S^2$ in terms of the standard generators of the pure braid group. The link diagram of $K\cup U$ in this case, contains only full twists. We can remove these full twists by blowing up, by adding an unknot with framing $\pm 1$ which changes the framings by adding $\pm 1$. We remove the full twists of $U$ as in Figure~\ref{casesp1} so that $U$ has linking number $1$ with $K$. Next  we continue blowing up at the points indicated in Figure~\ref{casesp1} and the surgery framing of $U$ is $0$ again, see Figure~\ref{casesp2}.

\begin{figure}[h!]
\begin{center}
 \includegraphics[width=12.8cm]{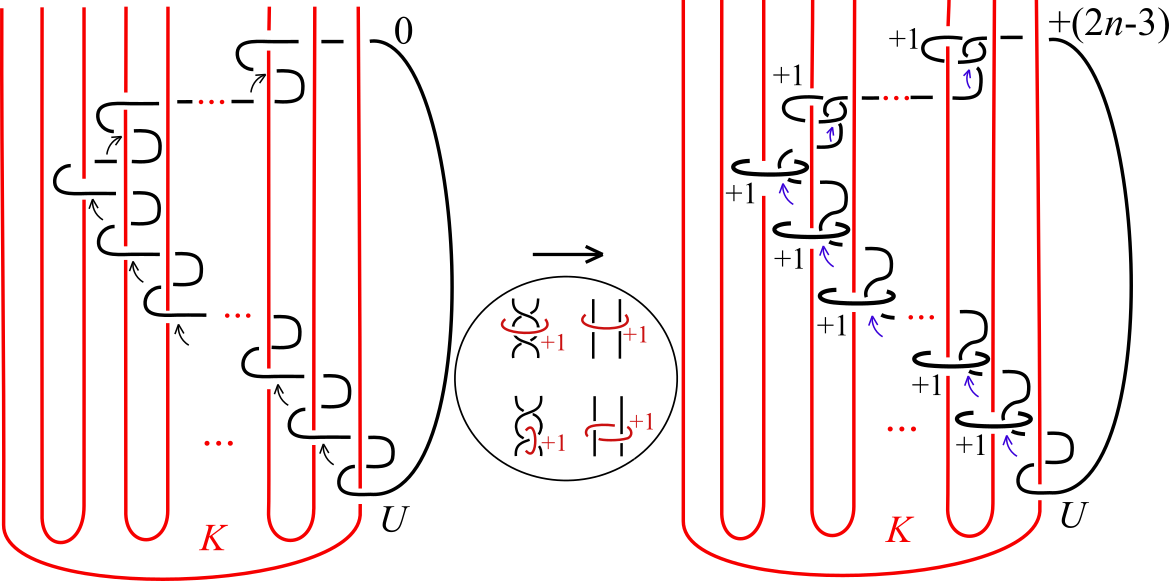}
 \caption{Removing the full twist by blowing up.}
  \label{casesp1}
\end{center}
\end{figure}

\begin{figure}[h!]
\begin{center}
 \includegraphics[width=5.5cm]{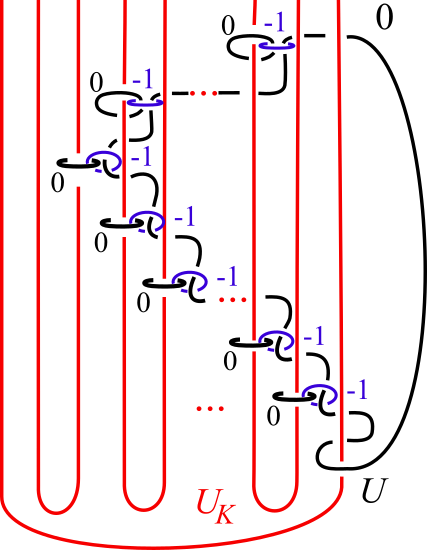}
 \caption{The unknot $U$ has linking number $1$ with $K$ and it has framing $0$.}
  \label{casesp2}
\end{center}
\end{figure}

Then, we unknot $K$ by following the algorithm from Remark $5.1$ and Theorem $5.2$ of \cite{On} that puts the knots in $S^3$ on a planar page of an open book decomposition whose monodromy is a product of positive Dehn twists. The unknotted $K$ (call $U_K$) bounds a disk in $S^3$ and  $S^3$ has a natural open book decomposition with this disk page and with the monodromy $\phi = Id$. We will use this open book decomposition to construct the desired open book of $S^1 \times S^2$ containing $K$ on its page. A page of an open book decomposition of $S^1 \times S^2$ is obtained from the disk $U_K$ bounds after $0$-framed surgeries. More specifically, each $0$-framed unknot (in particular the unknot $U$) is arranged to puncture each disk page transversely once to form a binding component after surgery. Each $-1$-framed unknot is arranged to be isotoped onto one of the punctured disk pages to perform the surgery on the page. Performing $-1$-surgeries on these unknots on the pages will yield an open book decomposition of $S^1 \times S^2$ whose monodromy is the old monodromy $\phi = Id$ composed with a right-handed Dehn twist along each unknot. Moreover, by \cite{Gi2} this open book supports a Stein fillable and hence a tight contact structure on $S^1 \times S^2$. After the surgeries, the unknot $U_K$ on the page will be isotopic to the knot $K$ and it can be Legendrian realized. The proof extends for any given link type.
\end{proof}

\begin{remark} Note that if we follow the algorithm from the proof of Theorem~\ref{thm1} for $\l$, then the surgery framing of $U$ in Figure~\ref{casesp2} will be $-p$. To arrange the surgery framing of $U$ to be $0$, we need to blow up $p$ times, by adding $p$ unknots with framing $+1$. In this case, $+1$-framed unknots contribute negative Dehn twists to the monodromy of the open book yielding an overtwisted contact structure on $\l$ (it is easy to find a sobering arc in this case, \cite{Gmn}). This is the reason why we cannot generalize Theorem~\ref{thm1} to $\l$.  See Figure~\ref{remark}. \label{remark2}
\end{remark}
\begin{figure}[h!]
\begin{center}
 \includegraphics[width=8cm]{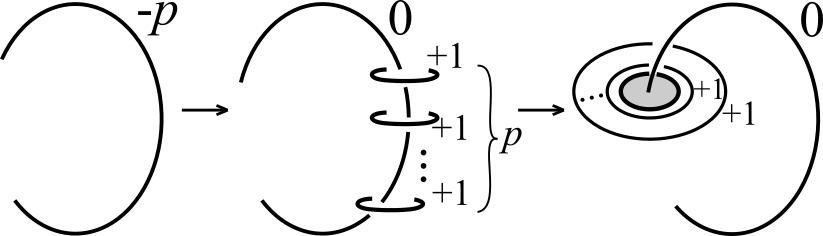}
 \caption{ Unknots with $+1$ framing contribute negative Dehn twists to the monodromy.}
  \label{remark}
\end{center}
\end{figure}

\begin{proof}[Proof of Theorem~\ref{thm2}] Let $K$ be a knot in $\l$ decomposed in terms of the standard generators of the pure braid group and let $U$ be the unknot with surgery framing $-p$ decomposed as in Figure~\ref{knot3}. To construct an open book decomposition for $\l$, we arrange the surgery framing of $U$ to be $-1$. To this end we blow up at indicated points in Figure~\ref{caseslp1}, by adding $+1$-framed unknots at $2n-2$ points. This adds $+(2n-2)$ to the surgery framing $-p$.
 
\begin{figure}[h!]
\begin{center}
 \includegraphics[width=12cm]{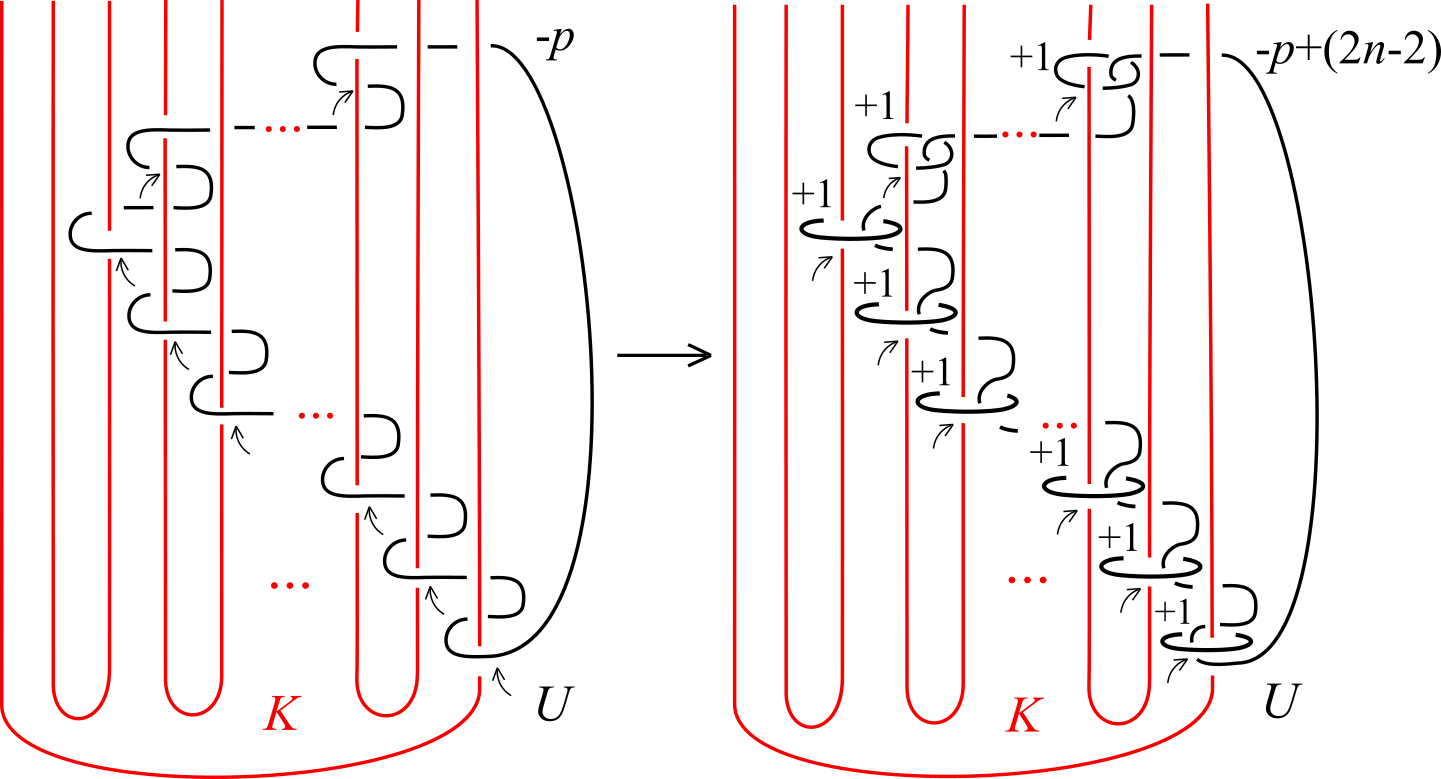}
 \caption{Arranging the framing of $U$.}
  \label{caseslp1}
\end{center}
\end{figure}

Next, we continue blowing up as in Figure~\ref{caseslp2} to arrange the framing of any unknot having linking number $1$ with $K$ to be $0$.

\begin{figure}[h!]
\begin{center}
 \includegraphics[width=12cm]{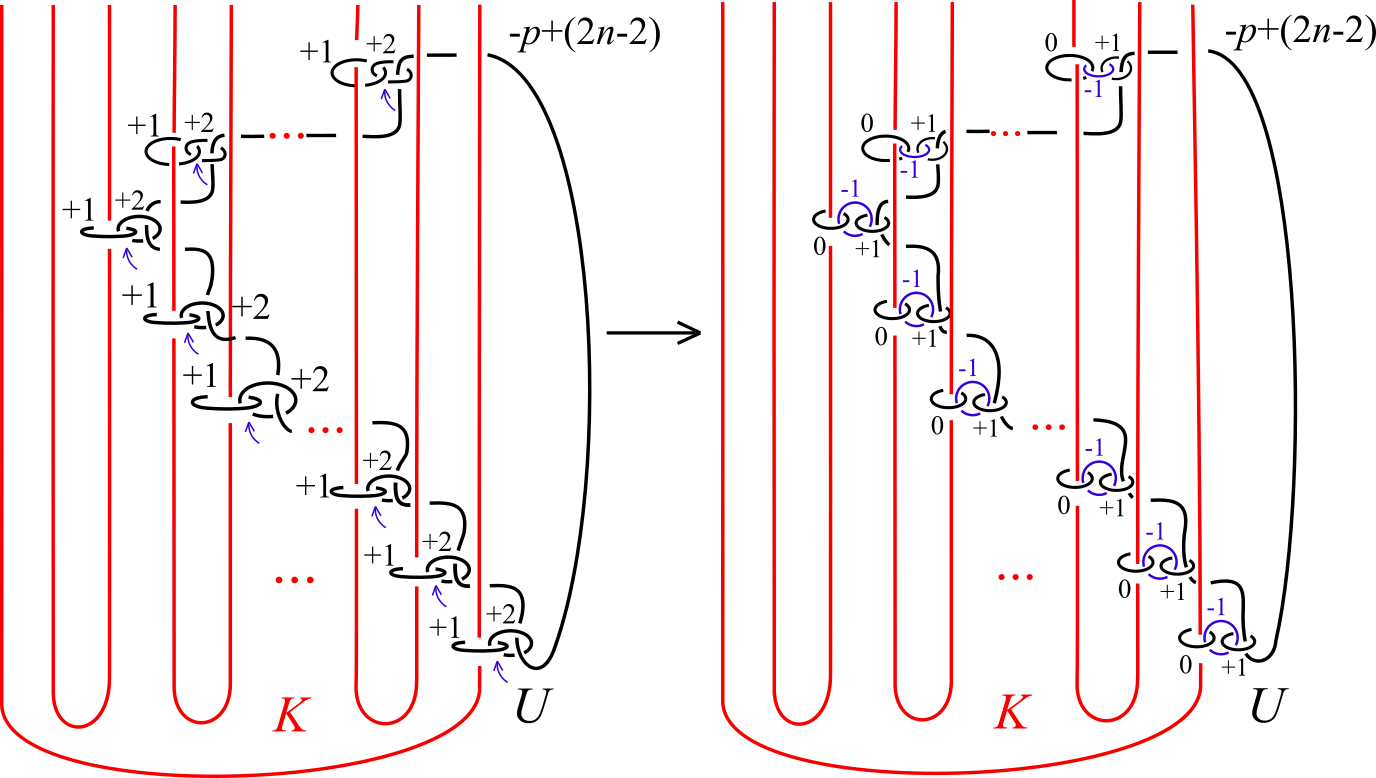}
 \caption{Each unknot linking $K$ has framing $0$.}
  \label{caseslp2}
\end{center}
\end{figure}

Now, to arrange the surgery framing of $U$ to be $-1$, we blow up by adding $p-2n+1$ unknots with framing $+1$ as in Figure~\ref{caseslp3}. This is possible only when $-p+(2n-2) < 0 $. In addition, we arrange the framing of any unknot having linking number $1$ with $U$ to be $0$ in Figure~\ref{caseslp3}  (if $-p+(2n-2) \geq 0$, then to arrange the framing of $U$ to be $-1$, we need to add unknots with framing $-1$ and to continue to arrange these to have framing $0$, we need to add unknots with framing $+1$. However, similar to the case of Remark~\ref{remark2}, this yields an open book decomposition that supports an overtwisted contact structure). 

Finally, we unknot the knot $K$ by using the algorithm from Remark $5.1$ of \cite{On}. We denote the unknotted $K$ by $U_K$. Thus, we get a   collection of link of unknots having framing $0$ or framing $-1$ only, a Kirby diagram corresponding to the open book decomposition. A page of an open book decomposition is constructed by taking the connected sum of the disk bounded by $U_K$ and the disk bounded by $U$. We arrange each $0$-framed unknot to puncture each disk page transversely once to form a binding component after the surgery. Each $-1$-framed unknot (in particular the unknot $U$) is arranged to be isotoped onto one of the punctured disk pages to perform the surgery on the page. Since each $-1$-framed unknot contributes positive Dehn twists to the monodromy, we get an open book decomposition for $L(p,1)$ supporting a tight contact structure. After the surgeries, $U_K$ will be isotopic to $K$ and we can  Legendrian realize $K$ on the page.
\begin{figure}[h!]
\begin{center}
 \includegraphics[width=14.2cm]{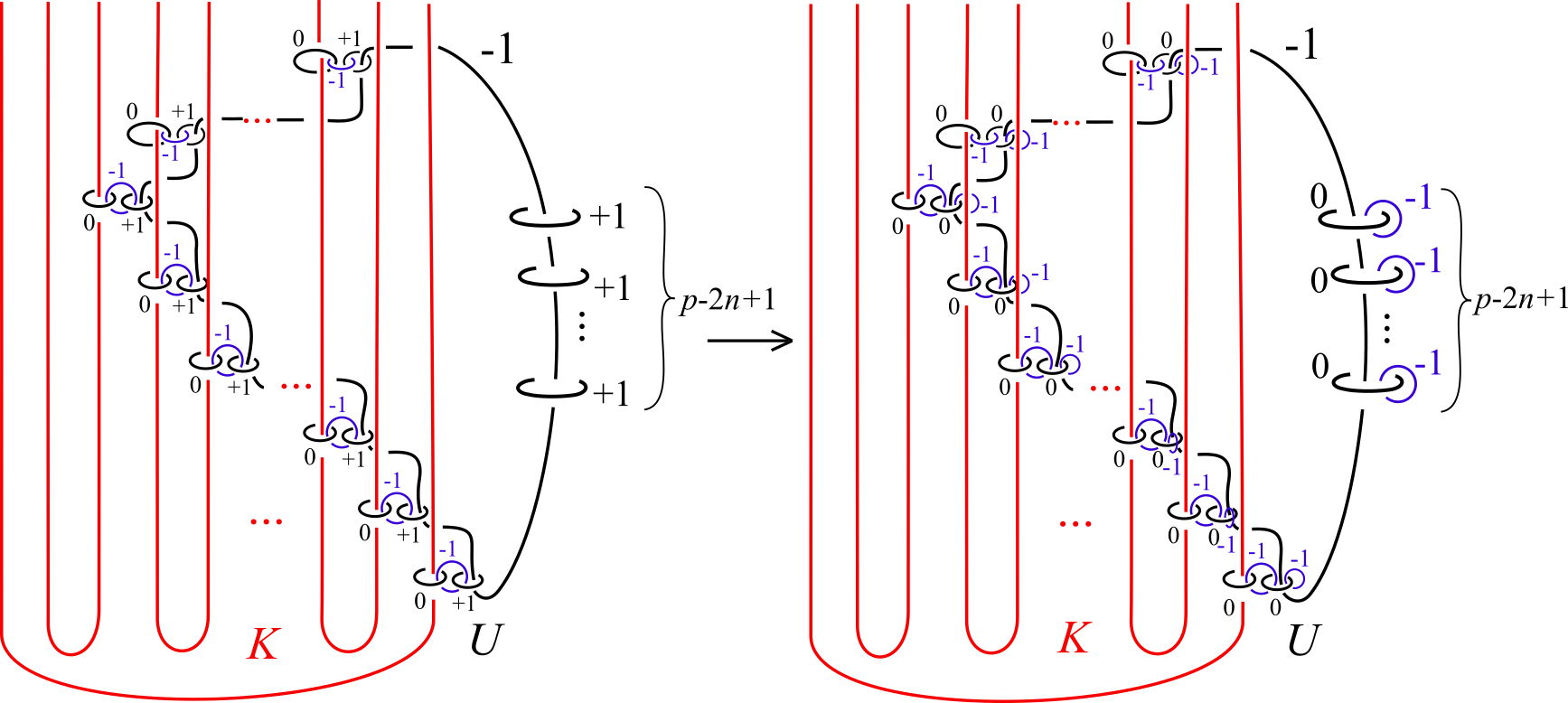}
 \caption{ Each unknot linking $U$ has framing $0$ and  $U$ has surgery framing $-1$.}
  \label{caseslp3}
\end{center}
\end{figure}
\end{proof}

We now give illustrative examples.

\begin{example} Let $K$ be a knot in $L(4,1)$ as in Figure~\ref{nel1} where the knot $K$ and the unknot $U$ are decomposed in terms of the standard generators of the pure braid group.
\begin{figure}[h!]
\begin{center}
 \includegraphics[width=6.5cm]{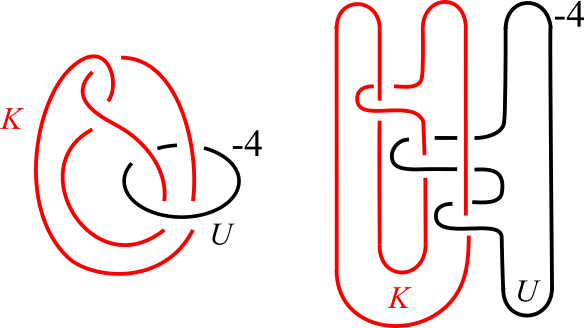}
 \caption{A knot $K$ in $L(4,1)$. $K$ is presented as a $4$-pure braided plat and $K \cup U$ is decomposed in terms of the standard generators of the pure braid group.}
  \label{nel1}
\end{center}
\end{figure}

Follow the algorithm from Theorem $5.2$ of \cite{On} for $K$ and follow the algorithm from Theorem~\ref{thm2} for $U$ and obtain Figure~\ref{nel2} where the unknot $U$ has the framing $-1$ at the end. 

\begin{figure}[h!]
\begin{center}
 \includegraphics[width=14cm]{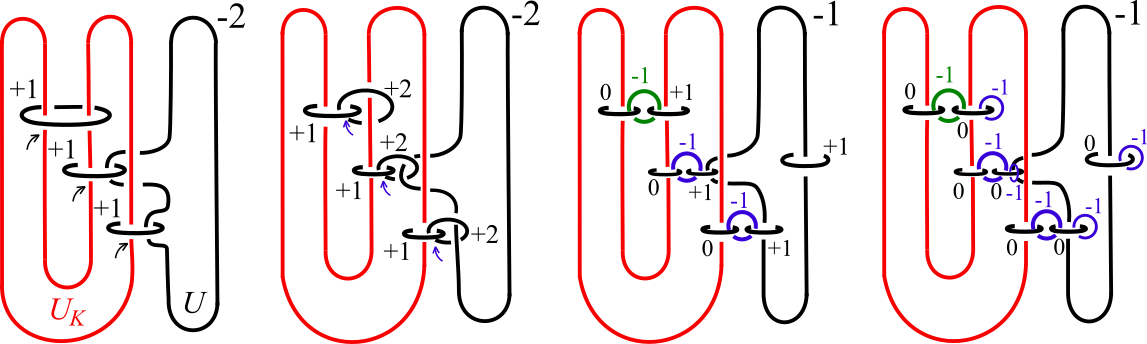}
 \caption{Unknotting $K \cup U$.}
  \label{nel2}
\end{center}
\end{figure}

A page of an open book decomposition is constructed by taking the connected sum of the disk bounded by $U_K$ and the disk bounded by $U$. We can now isotope $-1$-framed unknots onto a page using the band for the connected sum. We obtain a planar open book decomposition for $L(4,1)$ where $U_K$ and each $-1$-framed unknot sit (also $U$) on the page and each $0$-framed component forms a binding component as in Figure~\ref{oblens}. The page of the open decomposition is a disk with seven punctures. Note that after performing the surgeries, each $-1$-framed unknot contributes a right-handed Dehn twist to the monodromy of the open book decomposition and the unknot $U_K$ will be isotopic to the knot $K$. By \cite{Gi2}, this open book supports a Stein fillable and hence a tight contact structure on $L(4,1)$. We can Legendrian realize each $-1$-framed unknot and $K$ on the page of this open book. 

\begin{figure}[h!]
\begin{center}
 \includegraphics[width=11cm]{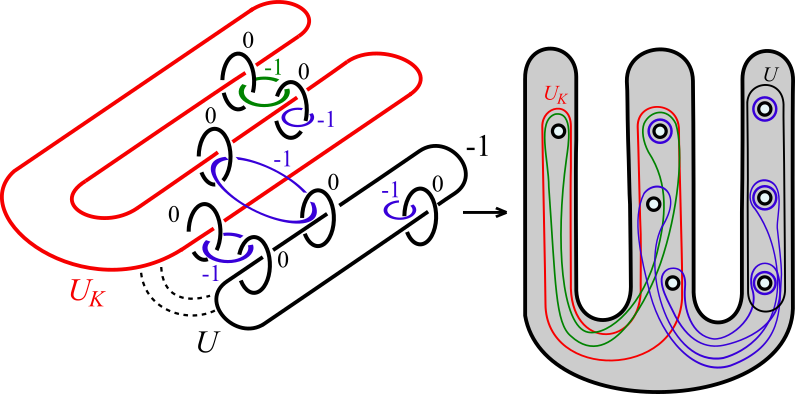}
 \caption{Taking connected sum of the disks bounded by $U_K$ and $U$ to construct a page.}
  \label{oblens}
\end{center}
\end{figure}
\end{example}

\begin{example} Let $K$ be a knot in $S^1 \times S^2$ where $K$ and $U$ are decomposed in terms of the standard generators of the pure braided group as in Figure~\ref{nex1}. 
\begin{figure}[h!]
\begin{center}
 \includegraphics[width=6.5cm]{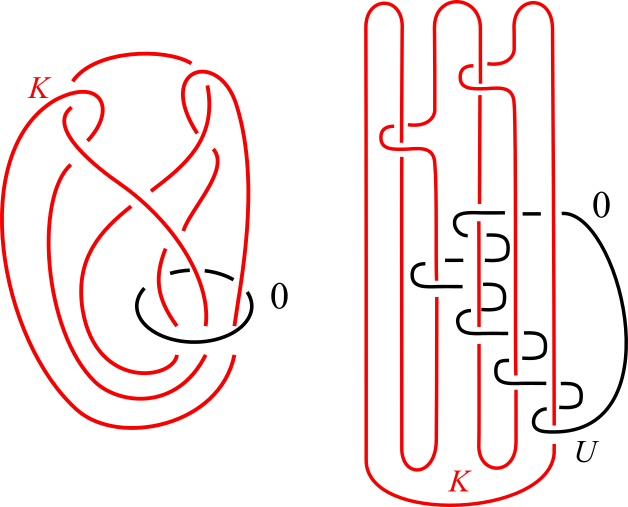}
 \caption{A knot $K$ in $S^1 \times S^2$. $K$ is presented as a $6$-pure braided plat and $K \cup U$ is decomposed in terms of the standard generators of the pure braid group.}
  \label{nex1}
\end{center}
\end{figure}

By following the proof of Theorem~\ref{thm1}, unknot the knot $K$ and call it $U_K$. Continue blowing up at the points indicated in Figure~\ref{nex2} so that $U$ has linking number $1$ with $U_K$ and has framing $0$ at the end. Further arrange all $0$-framed unknots to have linking number $1$ with $U_K$ as in  Figure~\ref{obex1} (and hence each $0$-framed unknot punctures the disk $U_K$ bounds).

\begin{figure}[h!]
\begin{center}
 \includegraphics[width=7.7cm]{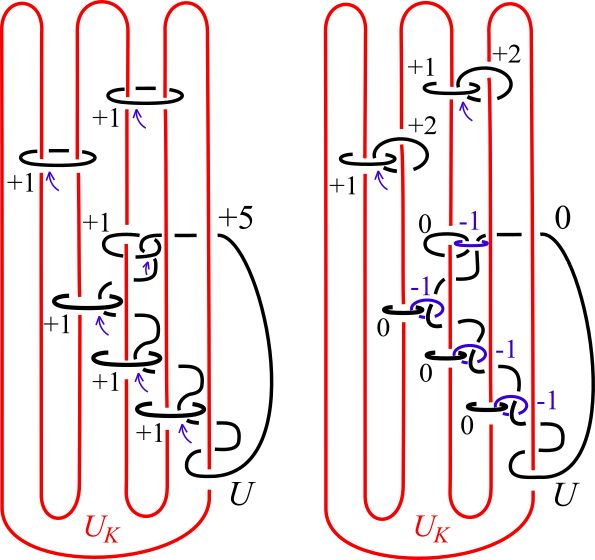}
 \caption{Unknotting $K \cup U$.}
  \label{nex2}
\end{center}
\end{figure}

A page of an open book decomposition for  $S^1 \times S^2$ containing $K$ is constructed from the disk bounded by $U_K$ in $S^3$, see Figure~\ref{obex1}. The $3-$sphere $S^3$ has a natural open book decomposition with this disk page and with the monodromy $\phi = Id$.  After performing the $0$-surgeries, each unknot with framing $0$ (also $U$) forms a binding component. The page of the open book decomposition is a disk with nine punctures. We isotope all $-1$-framed unknots onto a page. After performing $-1$-surgeries, we will get an open book decomposition of $S^1 \times S^2$ whose monodromy is the old monodromy $\phi = Id$ composed with a right-handed Dehn twist along each $-1$-framed unknot. Pushing $U_K$ off of itself inside the page is a way to see $U_K$ on the page.  After the surgeries, $U_K$ will be isotopic to $K$ and $K$ can be Legendrian realized on the page.
\begin{figure}[h!]
\begin{center}
 \includegraphics[width=8cm]{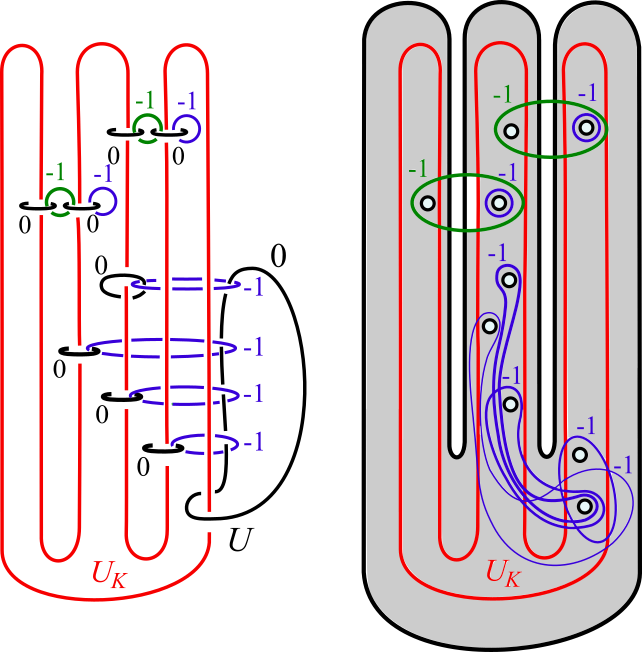}
 \caption{Passing from a surgery diagram to an open book decomposition. (Altough two green $-1$-framed unknots are not drawn on the page, both surgeries can be performed on the page.)}
  \label{obex1}
\end{center}
\end{figure}
\end{example}

\section{Final Remarks and questions}

\begin{remark} By Lemma 5.4 of \cite{On}, any Legendrian knot in any tight lens space $L(p,1)$ or in the tight $S^1 \times S^2$ with a Thurston–Bennequin invariant $\tb(L) > 0$  has $sg(L) > 0$. Thus, the Legendrian representative constructed in the Theorem~\ref{thm1} or in Theorem~\ref{thm2} is a Legendrian knot having the Thurston-Bennequin invariant less than or equal to zero. Thus, it would be very interesting to know answers to the following questions.
\end{remark}
\begin{question} Let $L$ be a Legendrian knot in  tight $(\l, \xi)$ with $\tb(L) \leq 0$. For all such Legendrian knots, is $sg(L) = 0$? 
\end{question}
\begin{question} Is there a contact $3$-manifold where all Legendrian knots have $sg(L)=0$?
\end{question}
\begin{question} Do all contact $3$-manifolds contain a Legendrian knot $L$ with $sg(L)>0$? 
\end{question}
\begin{question} Does there exist a contact $3$-manifold containing a  Legendrian knot with $sg(L)=2$? Construct explicit examples of Legendrian knots with $sg(L)=2$.
\end{question}
\begin{question} Can a Legendrian knot have arbitrary large support genus?
\end{question}
There are many operations one can perform on knots: connected sum, cabling, Whitehead doubling, Legendrian satellite operation. Along these lines:
 \begin{question} Which operations on knots preserve the support genus? Is there an operation on knots which increases/decreases the support genus?
\end{question}
\begin{question} Which contact surgeries preserves the support genus?
\end{question}
Another interesting question is
\begin{question} For what knot types in a contact $3$-manifold are all Legendrian knots with maximal Thurston-Bennequin invariant determined by their support genus invariant? 
\end{question}

\begin{remark} The planar open book constructed in Theorem~\ref{thm2} supports some tight contact structure on $\l$ which we can not keep track of from the algorithm. It would be very interesting to find an algorithm that works for all possible tight contact structures on lens spaces and which puts the given knot on the page and keeps the open book decomposition planar. Note that since $S^1 \times S^2$ has a unique tight (and Stein fillable) contact structure such a problem does not appear for $S^1 \times S^2$. 
\end{remark}
\begin{question} Do all knot types in any tight $\l$ have a Legendrian representative $L$ such that $sg(L) = 0$?
\end{question}
It is not (a priori) clear that Theorem~\ref{thm2} extends to general lens spaces.
\begin{question} For $p> q >0$, do all knot types in any tight $L(p,q)$ have a Legendrian representative $L$ such that $sg(L) = 0$?
\end{question}

\end{document}